\newtheorem{theorem}{Theorem}[section]
\newtheorem*{theorem*}{Theorem}
\newtheorem{lemma}[theorem]{Lemma}
\newtheorem{corollary}[theorem]{Corollary}
\newtheorem{proposition}[theorem]{Proposition}
\newtheorem{definition}[theorem]{Definition}
\newtheorem{example}[theorem]{Example}
\newtheorem{observation}[theorem]{Observation}
\DeclareMathOperator{\Div}{div}
\DeclareMathOperator{\grad}{grad}
\DeclareMathOperator{\ric}{Ric}
\begin{document}

\title[Spacelike Foliations on Lorentz manifolds]{Spacelike Foliations on Lorentz manifolds}

\author[Aldir Brasil]{Aldir Brasil}
\address[Aldir Brasil]{Universidade Federal do Ceará, Campus do Pici, CEP 60455-760, Fortaleza, Cear\' a, Brazil}
\email{aldir@mat.ufc.br}

\author[Sharief Deshmukh]{Sharief Deshmukh}
\address[Sharief Deshmukh]{King Saud University,	Riyadh 11451, Saudi Arabia}
\email{shariefd@ksu.edu.sa}

\author[Euripedes C. da Silva]{Euripedes da Silva}
\address[Euripedes C. da Silva]{Instituto Federal de Educa\c c\~ao, Ci\^encia e Tecnologia do Cear\'a, 
	Avenida Parque Central, 1315, CEP 61939-140, Maracana\'u, Cear\' a, Brazil}
\email{euripedescarvalhomat@gmail.com}

\author[Paulo Sousa]{Paulo Sousa}
\address[Paulo Sousa]{Universidade Federal do Piauí, Bairro Ininga, 64049-550, Teresina, Piauí,  Brazil}
\email{paulosousa@ufpi.edu.br}

\keywords{Foliations, Totally umbilic, Stable  hypersurface, Totally geodesic}
\subjclass[2010]{53C12, 53C42}

\begin{abstract}
In this work, we study the geometric properties of spacelike foliations by hypersurfaces on a Lorentz manifold. We investigate conditions for the leaves being stable, totally geodesic or totally umbilical. We consider that $\overline{M}^{n+1}$ is equipped with a timelike closed conformal vector field $\xi$. If the foliation has constant mean curvature, we show that the leaves are stable. When the leaves are compact spacelike hypersurfaces we show that, under certain conditions, its are totally umbilic hypersurfaces. In the case of foliations by complete noncompact hypersurfaces, we using a Maximum Principle at infinity to conclude that the foliation is totally geodesic.

\end{abstract}
\maketitle
\section{Introduction and statement of the main results}

The study of the foliation on Riemannian or Lorentz manifolds have been studied by many authors. In the context Riemannian we point out the works  \cite{lucas, montielriem, barroscaminha, oshikiristable}. They focus on the geometry of the leaves in order to answer if the leaves are totally geodesic, stable and umbilical hypersurfaces. The totally geodesic foliations were studied in  \cite{abe, lucas, David}. We highlight the work of Barbosa et. al in \cite{lucas}, where they proved that a foliation whose leaves have constant mean curvature should be totally geodesic, and such a foliation does not exist in the sphere. The Montiel's analysis of minimality \cite{montielriem}, shows that the existence of a closed conformal vector field implies the existence of an umbilical foliation. The study of the stability of foliations is included in the works of Barbosa, Carmo and Montiel (see \cite{lucasestum, lucasestdois, montielriem}). The notion of stability for Riemannian manifolds was presented by Barbosa and Carmo \cite{lucasestdois}, where they show that the spheres are critical points of the area functional for variations that preserve volume.

A Lorentz manifold $(\overline{M},\overline{g})$ of dimension $(n+1)$ is a smooth manifold endowed with  a pseudo-Riemannian metric $\overline{g}$ of signature $(+ + \cdots + -)$. So, tangent vectors $X_p \in T_p\overline{M}$ are called spacelike, lightlike or timelike if $\overline{g}(X_p,X_p)$ is positive, equal to zero or negative, respectively. We say that a Lorentz manifold $\overline{M}$ is time-oriented if there is a vector field $X \in \mathfrak{X}(\overline{	M})$ such that at all points $p \in \overline{M}$, the vector $X_p$ is timelike. A hypersurface $L$ of a Lorentz manifold $\overline{M}$ is said to be spacelike if the metric on $L$, induced by the metric of $\overline{M}$, is positive definite, that is, the induced metric on $L$ is Riemannian.

 In the context spacelike foliations on a Lorentz manifold we point out the works (see \cite{gervasiopalmas, barbosaoliker, euripedes, montiellorentz}). Barbosa and Oliker \cite{barbosaoliker} considered the same problem for spacelike hypersurfaces with constant mean curvature in the Lorentzian context and they proved that such hypersurfaces are also the critical point of the area functional for spacelike variations that preserve volume. In 1999 Montiel \cite{montiellorentz}, studied spacelike foliations on  Lorentz manifolds and showed that the existence of a timelike closed and conformal vector field on such a manifold furnishes a totally umbilical spacelike foliation. More recently, Chaves and Silva in \cite{euripedes} characterize totally geodesic spacelike foliations on Lorentz manifolds. As a consequence of the main result, they obtained an obstruction to the existence of spacelike foliations on anti de
 Sitter space with hypothesis at infinity.

Taking into account the motivations given above, the following questions appear naturally:

{\it 
\begin{enumerate}[(1)]
	\item For a given foliation by spacelike hypersurfaces on a Lorentz manifold what conditions should be imposed on the leaves to be stable, umbilical or geodesic hypersurfaces?
	\item Is it possible to know or determine the geometry of a foliation knowing information about such a foliation only at infinity? 
\end{enumerate}
}
In this work, we aim to give answers to the above questions. 

In Section 2, we state some preliminaries and basic equations.

In section 3, we present conditions for a constant mean curvature foliation to be stable. More precisely,

{\it Let $\mathcal{F}$ be a spacelike foliation by hypersurfaces on a Lorentz manifold $\overline{M}^{n+1}$. Assume that each leaf of $\mathcal{F}$ has the same constant mean curvature. Then we have:
	\begin{enumerate}
		\item Each leaf of $\mathcal{F}$ is stable.
		\item For a compact leaf $L$ of $\mathcal{F}$. A normal section $V=fN\in \Gamma(\nu(L))$ is a Jacobi field if and only if $\nabla f + f\overline{\nabla}_NN=0$ on $L$, where $\nabla f$ stands for the gradient of $f$. Moreover, the following two conditions are equivalent:
		\begin{enumerate}
			\item $L$ admits a non-trivial Jacobi field.
			
			\item $L$ admits a non-vanishing Jacobi field.
		\end{enumerate}
\end{enumerate}}

The result quoted above is an extension of the results proved by Oshikiri \cite{oshikiristable} and Barbosa et al. \cite{lucasfoli}.

In section 4, we consider spacelike foliations transversal to a unit timelike closed and conformal vector field and under a geometric constraint at infinity we show that one such foliation is totally geodesic. More precisely

{\it Let $(\overline{M}^{n+1},\mathcal{F},\overline{g},N)$ be a foliation of a Lorentz manifold. Assume that $\mathcal{F}$ is transversal to a timelike closed conformal field $\xi$ with constant norm $1$ and oriented by the choice of $N$ such that $ \overline{g}(N,H_{\xi}\cdot\xi)\geq0$. If the second fundamental form $A$ of $L\in \mathcal{F}$ with respect to $N$ is nonnegative and $N$ converges to $\xi$ at infinity, then $\mathcal{F}$ is a totally geodesic foliation.}

We present a relevant particular case of Theorem \ref{FolTolUmb}, a Bernstein-type result, quoted below.

{\it Let $\mathcal{F}$ be a foliation by entire graphics on Euclidian space $\mathbb{R}^{n+1}$. If the second fundamental form of the foliation $\mathcal{F}$ with respect to the upward-pointing unit normal vector field $N$ is nonnegative and $N$ converges to a fixed vector $\xi$ at infinity, then each leaf of $\mathcal{F}$ is a hyperplane orthogonal to $\xi$.}

Finally in section 5, we studying spacelike compact foliations
and assuming certain geometric restrictions we prove that foliation is totally umbilical. For this, we will consider each leaf of foliation immersed as a hypersurface. 
Let $L$ be a compact leaf of the spacelike foliation on a Lorentz manifold $\overline{M}$, we prove that 

{\it Let $\xi $ be a closed conformal vector field on an $(n+1)$-dimensional Lorentz manifold $\left( \overline{M}, \overline{g}\right) $ and $L$ be a spacelike compact and connected hypersurface of $\overline{M}$ with timelike unit normal vector field $N$ such that $\xi $ is not tangent to $L$. Let $w$ be the tangential component of $\xi $ to $L$. If the Ricci curvature $%
	\ric\left( w,w\right) $ of the hypersurface satisfies%
	\[
	\ric\left( w,w\right) \geq \frac{n-1}{n}\left(\Div w\right) ^{2},
	\]%
then $L$ is totally umbilical hypersurface.}


\section{Preliminaries and basic equations}

In this Section, we introduce some basic facts and notations that will appear in the paper.

Let $\overline{M}^{n+1}$ be a Lorentz manifold endowed with a semi-Riemannian metric $\overline{g}$ and  $\mathcal{F}$ is a spacelike foliation of codimension one on $\overline{M}$. Let $N$ be a unit timelike vector field normal to the leaves of $\mathcal{F}$, then the mean curvature of the leaf is exactly the mean curvature on the direction of $N$. Hereafter, a quadrup let $(\overline{M},\mathcal{F},\overline{g},N)$ always means these situations.

For each $p\in\overline M$, we consider the linear 
operator $A:T_{p}\overline M\to T_{p}\overline M$ defined by $A(Y(p))=-\overline{\nabla}_{Y(p)}N$, where $\overline{\nabla}$ denotes the Levi-Civitta connection of $\overline M$. It is clear that if $Y$ is a smooth vector field on $\overline M$, then the same is true of $A(Y)$. Moreover, letting $A_{L}$ denote the second fundamental form of a leaf $L$ of $\mathcal F$, which we will also denote by $A$. Thereby, the mean curvature is  
\begin{equation*}
H=-\frac{1}{n}\mbox{tr}(A).
\end{equation*}

For a given point  $p\in \overline{M}$ we can choose an orthonormal frame $\{e_1,...,e_n,e_{n+1}\}$ defined around  $p$ such that the vectors $e_1,...,e_n$ are tangent to the leaves of  $\mathcal{F}$ and  $e_{n+1}=N$. Such a frame is usually called an adapted frame. The  divergent of a vector field  $V$ is defined  locally over  $M$ by 
\begin{equation*}
\Div(V)=\sum_{A=1}^{n+1}{\epsilon_A\overline{g}(\overline{\nabla}_{e_A}V,e_A)}.
\end{equation*}

For a vector field tangent to the leaves of  $\mathcal{F}$ the divergent along the leaves can be computed by
\begin{equation*}
\Div_L(V)=\sum_{i=1}^{n}{\overline{g}(\overline{\nabla}_{e_i}V,e_i)}.
\end{equation*}
Moreover, the Ricci curvature on the direction  $N$  is
\begin{equation*}
\overline{\ric}(N)=-\sum_{i=1}^{n}{\overline{g}(\overline{R}(N, e_i) N,e_i)}.
\end{equation*}

In \cite{euripedes}, Chaves and Silva found an equation that relates the foliation  with the ambient, more precisely.
\begin{proposition} \label{propequation}
Let $\mathcal{F}$ be a spacelike foliation by hypersurfaces on a Lorentz manifold $\overline{M}$ and let $N$ be a unit field normal to the leaves of $\mathcal{F}$ on some open set $U$ of $\overline{M}$. Then on $U$, we have
\begin{enumerate}
\item $\label{divN} \Div N = nH;$
			
\item $\label{divL} \Div_L(\overline{\nabla}_NN)= nN(H)-\|A\|^2+n\overline{\ric}(N)+\|\overline{\nabla}_NN\|^2;$
			
\item $\label{divX} \Div X = \Div_L (\overline{\nabla}_NN) +\|\overline{\nabla}_NN\|^2,$
\end{enumerate}
where $H$ is the mean curvature on the direction $N$.
\end{proposition}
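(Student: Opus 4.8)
Both equations (1) and (3) I would dispatch as short warm-ups, putting all the effort into (2). Fix $p\in U$ and an adapted orthonormal frame $\{e_1,\dots,e_n,e_{n+1}=N\}$ near $p$, with $\epsilon_i=1$ for $i\le n$ and $\epsilon_{n+1}=\overline{g}(N,N)=-1$. Differentiating $\overline{g}(N,N)=-1$ in an arbitrary direction $X$ gives $\overline{g}(\overline{\nabla}_X N,N)=0$; hence $\overline{\nabla}_N N$ and every $\overline{\nabla}_{e_i}N$ are tangent to the leaves, and $\|\overline{\nabla}_N N\|^2\ge 0$ since $\overline{\nabla}_N N$ is spacelike. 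For (1), the $(n{+}1)$-st term of $\Div N=\sum_A\epsilon_A\overline{g}(\overline{\nabla}_{e_A}N,e_A)$ is $-\overline{g}(\overline{\nabla}_N N,N)=0$, while $\sum_{i=1}^n\overline{g}(\overline{\nabla}_{e_i}N,e_i)=-\sum_i\overline{g}(A(e_i),e_i)=-\tr A=nH$. For (3), isolating the normal slot gives, for $X=\overline{\nabla}_N N$, $\Div X=\Div_L X-\overline{g}(\overline{\nabla}_N\overline{\nabla}_N N,N)$; differentiating $\overline{g}(\overline{\nabla}_N N,N)=0$ along $N$ yields $\overline{g}(\overline{\nabla}_N\overline{\nabla}_N N,N)=-\|\overline{\nabla}_N N\|^2$, which is exactly (3).

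For (2) the curvature of $\overline{M}$ must enter, and the natural device is the commutation identity for $\overline{\nabla}$. I would evaluate at an arbitrary $p$ and choose the adapted frame so that, at $p$, it is leaf-geodesic ($\nabla^L_{e_i}e_j(p)=0$, where $\nabla^L$ is the leaf connection) and the leafwise part of $\overline{\nabla}_N e_i$ vanishes at $p$; both normalizations can be imposed at $p$ by a leafwise $\mathrm{SO}(n)$-rotation of the frame, whose first derivative at $p$ can be prescribed freely in every direction. Writing
\[
\overline{\nabla}_{e_i}\overline{\nabla}_N N=\overline{R}(e_i,N)N+\overline{\nabla}_N\overline{\nabla}_{e_i}N+\overline{\nabla}_{[e_i,N]}N,
\]
taking $\overline{g}(\cdot,e_i)$ and summing over $i=1,\dots,n$ produces three pieces. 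The first, $\sum_i\overline{g}(\overline{R}(e_i,N)N,e_i)=-\sum_i\overline{g}(\overline{R}(N,e_i)N,e_i)$, is the Ricci term. For the second I use that $\overline{\nabla}_{e_i}N=-\sum_j A_{ij}e_j$ holds as an identity of vector fields near $p$ (the normal component vanishes identically) with $\sum_i A_{ii}=\tr A=-nH$; differentiating along $N$ and discarding the normal contribution, which is orthogonal to $e_i$, gives $\sum_i\overline{g}(\overline{\nabla}_N\overline{\nabla}_{e_i}N,e_i)=-N(\tr A)=nN(H)$. For the third, $[e_i,N]=\overline{\nabla}_{e_i}N-\overline{\nabla}_N e_i=-A(e_i)-\overline{\nabla}_N e_i$, and at $p$ the term $\overline{\nabla}_N e_i$ is a multiple of $N$ whose coefficient is read off from $\overline{g}(\overline{\nabla}_N e_i,N)=-\overline{g}(e_i,\overline{\nabla}_N N)$; plugging this into $\overline{g}(\overline{\nabla}_{[e_i,N]}N,e_i)$ and summing contracts $A$ with itself and $\overline{\nabla}_N N$ with itself, producing precisely the $\|A\|^2$ and $\|\overline{\nabla}_N N\|^2$ terms. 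Collecting the three pieces yields (2). (Equivalently, one may apply the Ricci identity to $N^a\nabla_a\nabla_b N^b$ to commute derivatives and arrive at the same formula via (1) and (3), which I would mention as a cross-check.)

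The only genuinely delicate point is the frame bookkeeping in (2): one must verify that a single adapted orthonormal frame admits both normalizations at $p$, and one must keep track of which quantities are tensorial at $p$ and which require extending the frame off the leaf — the term $\overline{\nabla}_N\overline{\nabla}_{e_i}N$ is the one that really needs the off-leaf extension, and that is where the leafwise-normalization of $\overline{\nabla}_N e_i$ pays off. Everything else — items (1) and (3), the identification of the Ricci piece, and the reduction $\tr A=-nH$ — is immediate from $\overline{g}(N,N)=-1$ and the definitions.
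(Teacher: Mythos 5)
Your items (1) and (3) are correct and complete, and since the paper offers no proof of this proposition (it is quoted from \cite{euripedes}), your commutator-identity route for (2) is the natural one. The frame normalizations you worry about are indeed admissible and your identification of the three pieces is right. The problem is the last sentence of your argument for (2): ``collecting the three pieces'' does \emph{not} yield the formula as stated. Carried out with this paper's conventions ($A(Y)=-\overline{\nabla}_YN$, $\epsilon_{n+1}=-1$, $\overline{\ric}(N)=-\sum_i\overline{g}(\overline{R}(N,e_i)N,e_i)$), your term~1 gives $\overline{\ric}(N)$, not $n\,\overline{\ric}(N)$; and in term~3 one has $[e_i,N]=-A(e_i)-\overline{\nabla}_Ne_i$ with $\overline{\nabla}_Ne_i=\overline{g}(e_i,\overline{\nabla}_NN)N$ at $p$, so
\[
\sum_i\overline{g}\bigl(\overline{\nabla}_{[e_i,N]}N,e_i\bigr)=\sum_i\overline{g}\bigl(A^2e_i+A(\overline{\nabla}_Ne_i),e_i\bigr)=+\|A\|^2-\|\overline{\nabla}_NN\|^2 ,
\]
i.e.\ the $\|A\|^2$ term enters with a plus sign and the $\|\overline{\nabla}_NN\|^2$ term with a minus sign. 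The three pieces therefore sum to
\[
\Div_L(\overline{\nabla}_NN)=nN(H)+\|A\|^2+\overline{\ric}(N)-\|\overline{\nabla}_NN\|^2 ,
\]
which disagrees with the displayed item (2) in exactly the three places where you do not show the computation. Your own proposed cross-check exposes this: the index identity $\nabla_a(N^b\nabla_bN^a)=\mathrm{tr}\bigl((\nabla N)^2\bigr)+N(\Div N)+R_{ab}N^aN^b$ together with your (correct) item (3) gives the formula above, not the stated one.

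A concrete test: in Minkowski $3$-space foliate the interior of the light cone by the upper hyperboloids $-t^2+x^2+y^2=-r^2$ with $N=\frac{1}{r}(t,x,y)$. Then $\overline{\nabla}_NN=0$, $A=-\frac{1}{r}I$, $H=\frac{1}{r}$, $N(H)=-\frac{1}{r^2}$, $\overline{\ric}(N)=0$; the stated item (2) would give $0=-\frac{n}{r^2}-\frac{n}{r^2}$, which is false, while the corrected version gives $0=-\frac{n}{r^2}+\frac{n}{r^2}$. So part of the blame lies with the statement itself (note that the paper's own use of item (2) in the proof of Theorem 3.1 already drops the factor $n$ on the Ricci term), but a proof cannot simply assert that the computation ``produces precisely'' the stated terms: the signs coming from $\overline{g}(N,N)=-1$ in term~3 are precisely the delicate point you flagged, and they must be written out. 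As it stands, the step from your three pieces to the displayed identity (2) fails.
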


From now on, we will introduce some definitions and basic results about stability. For more details we recommend the references \cite{barbosaoliker}, \cite{barroscaminha} and \cite{oshikiristable}.

\begin{definition}
Let $(\overline{M},\mathcal{F},\overline{g},N)$ be a foliation and $L$ be any leaf of $\mathcal{F}$. We say that $L$ is stable if the following inequality holds
\[
\mathcal{J}''(0)(f)=\int_{L}{\left( f\Delta f-\|A\|^2f^2+\overline{\ric}(N)f^2 \right)dM}\leq0,
\]
for all $f\in \mathcal{F}(L;\mathbb{R})$, where $\mathcal{F} (L;\mathbb{R})$ is the set of differential functions $f:L\rightarrow \mathbb{R}$ com $f|_{\partial L}=0$ and $\int_L{fdM}=0$, and $\Delta$ is the Laplacian of $(L, \overline{g}_{|L})$.
\end{definition}

Let $L$ be a compact leaf of $\mathcal{F}$. As the normal bundle $\nu(L)$ of $L$ in $\overline{M}$ is trivial, any normal section $V\in\Gamma(\nu(L))$ on $L$ can be expressed uniquely as $V=fN$, with $f\in C^{\infty}(L)$. Define $J: \Gamma(\nu(L)) \rightarrow \Gamma(\nu(L))$ by
\[
J(f) = (\Delta f-f\|A\|^2+\overline{\ric}(N)f)N.
\]

\begin{definition}
We say that a normal section $V\in \Gamma(\nu(L))$ is a Jacobi field if $J(V)=0$. 
\end{definition}

\section{Stablity spacelike foliation on Lorentz manifolds}

In \cite{oshikiristable}, Oshikiri studied Jacobi fields and the stability of leaves of codimension-one minimal foliations on a Riemannian manifold and proved the stability of leaves (Theorem 1). Later, Barbosa et al. \cite{lucasfoli} considered codimension-one foliations of Riemannian manifolds whose leaves have the same constant mean curvature and proved that its leaves are strongly stable (Theorem 3.1). 

In our first result we obtain an extension of the results proved by Oshikiri and Barbosa et al. for foliations on Lorentz manifolds. Moreover answers our last question, that is, CMC spacelike foliation are stable. More precisely, we have the following.

\begin{theorem}\label{Theo1} 
Let $\mathcal{F}$ be a spacelike foliation by hypersurfaces on a Lorentz manifold $\overline{M}^{n+1}$. Assume that each leaf of $\mathcal{F}$ has the same constant mean curvature. Then we have:
\begin{enumerate}
\item Each leaf of $\mathcal{F}$ is stable.
		
\item For a compact leaf $L$ of $\mathcal{F}$. A normal section $V=fN\in \Gamma(\nu(L))$ is a Jacobi field if and only if $\nabla f + f\overline{\nabla}_NN=0$ on $L$, where $\nabla f$ stands for the gradient of $f$. Moreover, the following two conditions are equivalent:
\begin{enumerate}
\item $L$ admits a non-trivial Jacobi field.
			
\item $L$ admits a non-vanishing Jacobi field.
\end{enumerate}
\end{enumerate}
\end{theorem}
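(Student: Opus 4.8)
The plan is to exploit Proposition \ref{propequation} together with the stability inequality. Take any leaf $L$ of $\mathcal F$ and work with an adapted frame $\{e_1,\dots,e_n,N\}$. Since the whole foliation has the same constant mean curvature, the function $H$ is constant on all of $\overline M$, so in particular $N(H)=0$ and $n N(H)$ drops out of item (2) of Proposition \ref{propequation}. Combining items (2) and (3), we obtain on each leaf the pointwise identity
\[
\Div_L X = -\|A\|^2 + n\,\overline{\ric}(N) + 2\|\overline{\nabla}_N N\|^2,
\]
where $X=\overline{\nabla}_N N$ is tangent to $L$ (this is the standard consequence of $\overline g(N,N)=-1$). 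The key observation is that $X$ is, up to sign, a gradient: differentiating $\overline g(N,N)=-1$ shows $X=\overline{\nabla}_N N$ is $L$-tangent, and for a leaf with constant mean curvature the $1$-form dual to $X$ is closed, so locally $X=\nabla\varphi$ for some function $\varphi$; equivalently one checks $\overline g(\overline{\nabla}_{e_i}N,e_j)$ is symmetric (the leaves being integral manifolds of an integrable distribution makes the tangential part of $A$ symmetric), and then $X=-\nabla(\text{primitive})$ arises from $\overline{\nabla}_N N$ being the mean-curvature-type correction. I would phrase this cleanly by testing the stability functional directly.

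For part (1), the strategy is: given $f\in\mathcal F(L;\mathbb R)$, I want to bound $\mathcal J''(0)(f)=\int_L(f\Delta f-\|A\|^2 f^2+\overline{\ric}(N)f^2)\,dM$ from above by $0$. Using $\int_L f\Delta f\,dM=-\int_L\|\nabla f\|^2\,dM$ and substituting the expression for $\overline{\ric}(N)$ coming from the identity above,
\[
n\,\overline{\ric}(N)=\Div_L X+\|A\|^2-2\|X\|^2,
\]
so that $\overline{\ric}(N) f^2 - \|A\|^2 f^2 = \tfrac1n f^2\Div_L X - \tfrac{n-1}{n}\|A\|^2 f^2 - \tfrac2n\|X\|^2 f^2$. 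Integrating $f^2\Div_L X$ by parts gives $-2\int_L f\,\overline g(\nabla f,X)\,dM$, and then one completes the square in $\nabla f$ and $fX$. The upshot should be
\[
\mathcal J''(0)(f)\le -\int_L\Big\|\nabla f+\tfrac1n fX\Big\|^2\,dM - \Big(\tfrac{n-1}{n}\Big)\int_L\|A\|^2 f^2\,dM \le 0,
\]
once the integration by parts is organized correctly (the precise coefficient $\tfrac1n$ vs.\ $1$ will have to be pinned down by the computation, but the sign is forced). This is exactly the stability inequality, with equality analysis available for free.

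For part (2), a normal section $V=fN$ is Jacobi iff $\Delta f-\|A\|^2 f+\overline{\ric}(N)f=0$. Testing this equation against $f$ and integrating, the computation in part (1) becomes an equality: the nonpositive quantity $\mathcal J''(0)(f)$ vanishes, forcing both $\nabla f+f X=0$ (after fixing the constant — here I expect the correct normalization to give exactly $\nabla f+f\overline{\nabla}_N N=0$, matching the statement) and $\|A\|^2 f^2\equiv 0$ in the part-(1) integrand; conversely, if $\nabla f+f\overline{\nabla}_N N=0$ on $L$ then substituting back into the identity for $\Delta f$ (compute $\Delta f=\Div_L\nabla f=-\Div_L(fX)=-f\Div_L X-\overline g(\nabla f,X)$ and use the formula for $\Div_L X$) recovers the Jacobi equation. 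For the equivalence of (a) and (b): from $\nabla f=-f\overline{\nabla}_N N$ one sees that along any curve in $L$ the function $f$ satisfies a first-order linear ODE, so if $f$ vanishes at one point it vanishes identically; hence a non-trivial (i.e.\ not identically zero) Jacobi field is automatically non-vanishing, and the reverse implication is trivial.

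The main obstacle I anticipate is bookkeeping the integration by parts and the square-completion so that the coefficients land exactly as in the statement — in particular getting $\nabla f+f\overline{\nabla}_N N=0$ (coefficient $1$ on $f\overline{\nabla}_N N$) rather than some $\tfrac1n$-weighted version; this likely forces a different grouping of the terms than the naive one I sketched, or use of the relation $\Div_L X=\Div X-\|X\|^2$ to trade one divergence for another before integrating. Everything else is the divergence theorem on the compact leaf $L$ (boundaryless, or with $f|_{\partial L}=0$) plus Proposition \ref{propequation}.
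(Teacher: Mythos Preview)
Your overall strategy matches the paper's: use Proposition~\ref{propequation}(2) with $N(H)=0$, integrate by parts, complete a square, and for (a)$\Leftrightarrow$(b) run the first-order linear ODE along curves. The ODE argument you give for (a)$\Leftrightarrow$(b) is correct as written. However, the square-completion you sketch is wrong in a way that would break part~(2), and the fix is precisely the ``different grouping'' you suspect at the end.

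Do not bring in item~(3), and do not divide by $n$. Item~(2) alone, with $N(H)=0$ and $X:=\overline{\nabla}_NN$, gives
\[
-\|A\|^{2}+\overline{\ric}(N)=\Div_L X-\|X\|^{2}
\]
(the coefficient on $\overline{\ric}(N)$ is $1$; the ``$n$'' printed in the proposition is a slip, as the paper's own proof of this theorem uses coefficient $1$). Then $f^{2}\Div_L X=\Div_L(f^{2}X)-2f\,\overline g(\nabla f,X)$ and
\[
\mathcal J''(0)(f)=\int_{L}\bigl(-\|\nabla f\|^{2}+f^{2}\Div_L X-f^{2}\|X\|^{2}\bigr)\,dM
=-\int_{L}\|\nabla f+fX\|^{2}\,dM,
\]
a clean \emph{identity} with no residual $\|A\|^{2}f^{2}$ term. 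Your route (dividing by $n$) leaves behind $\tfrac{n-1}{n}\int_L\|A\|^{2}f^{2}\,dM$ and a leftover $\|X\|^{2}f^{2}$ piece. That is harmless for the inequality in~(1), but it ruins~(2): for a Jacobi field one has $\mathcal J''(0)(f)=\int_L\overline g(JV,V)\,dM=0$, and your formula would then force $\|A\|^{2}f^{2}\equiv 0$ together with $\nabla f+\tfrac1n fX=0$---the wrong coefficient \emph{and} an extra, unwarranted conclusion (totally geodesic on the support of $f$), so you would not recover the stated characterization. With the clean identity above, both directions of the first claim in~(2) go through exactly as you outline (pair $J(fN)=0$ with $f$ for one direction; take $\Div_L$ of $f\nabla f+f^{2}X=0$ and substitute $\Div_L X=-\|A\|^2+\overline{\ric}(N)+\|X\|^2$ for the other). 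Finally, the paragraph asserting that $X=\overline{\nabla}_NN$ is locally a gradient is neither needed nor justified; drop it.
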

\begin{proof}
	
For each leaf $L\in\mathcal{F}$ consider $f\in\mathcal{F} (M,\Bbb{R})$, by item (2) of Proposition \ref{propequation} we must have
\begin{eqnarray*}
\nonumber \left(-\|A\|^2f^2+\overline{\ric}(N)\right)f^2 & = & f^2\left( \Div_L(\overline{\nabla}_NN) -\|\overline{\nabla}_NN\|^2\right)\\
& = &\label{eqstabelinter} \Div_L(f^2\overline{\nabla}_NN)-(\overline{\nabla}_NN)(f^2)-f^2\|\overline{\nabla}_NN\|^2.
\end{eqnarray*}
Then, by definition
\begin{eqnarray*}\label{eqdh}
\mathcal{J}''(0)(f)&=&\int_{L}{\left( f\Delta f-\|A\|^2f^2+\overline{\ric}(N)f^2 \right)dM}\\
& = & \int_L{\left(-\|\nabla f\|^2+\Div_L(f^2 \overline{\nabla}_NN)-(\overline{\nabla}_NN)(f^2)-f^2\|\overline{\nabla}_NN\|^2\right)dM}\\
& = & -\int_L{\left(\|\nabla f\|^2+2f(\overline{\nabla}_NN)(f)+f^2\|\overline{\nabla}_NN\|^2-\Div_L(f^2\overline{\nabla}_NN)\right)dM}\\
& = & -\int_L{\|\nabla f+f\overline{\nabla}_NN\|^2dM}+\int_{L}{\Div_L(f^2\overline{\nabla}_NN)dM}.
\end{eqnarray*}
As the support of $f$ is compact, the second term on the right is null and we have 
\begin{eqnarray}\label{SecVar}
\mathcal{J}''(0)(f)= -\int_L{\|\nabla f+f\overline{\nabla}_NN\|^2dM}.
\end{eqnarray}

Hence we conclude that $\mathcal{J}''(0)(f)\leq0$ and, consequently, the leaf $L$ is stable. Now we will prove the first part of item (2). Let $V=fN$ be a Jacobi field, that is,  $J(V)=0$. Thus we have
\[
\mathcal{J}''(0)(f)=\int_{L}{\overline{g}(JV,V)}dM=0,
\]
according to Equation (\ref{SecVar}) we must have $\nabla f+f\overline{\nabla}_NN=0$.  Now we show the converse. If $\nabla f+f\overline{\nabla}_NN=0$, we multiply this equation by $f$ and take the divergent over $L$ 
\[
f\Delta f+\|\nabla f\|^2+ (\overline{\nabla}_NN) (f^2)+f^2\Div_L(\overline{\nabla}_NN)=0.
\]

Replacing item (2) of Proposition \ref{propequation} in the obtained equality, and using that the leaves have constant mean curvature, we infer
\[
(\Delta f-f\|A\|^2+\overline{\ric}(N)f)f+\|\nabla f+f \overline{\nabla}_NN\|=0,
\]
i.e., $(\Delta f-f\|A\|^2+\overline{\ric}(N)f)f=0$. Then, $J(fN)=0$.

If $L$ admits a non-vanishing Jacobi field, so $L$ admits a non-trivial Jacobi field. Conversely, assume that $L$ admits a non-trivial Jacob field $V=fN$, then by Proposition 3.6 in \cite{barbosaoliker} we have $\Delta f-\|B\|^2f+ \overline{\ric}(N)f=c$, for some constant $c\in \mathbb{R}$. Thereby
\[
\mathcal{J}''(0)(f)=\int_{L}{\overline{g}( J(V),V)}=0,
\]
for all $f\in \mathcal{F}(L,\mathbb{R})$. So, $\nabla f+f\overline{\nabla}_NN=0$. Assume that $f(p)=0$ for some $p\in L$. Now let $q\in L$ be a point of $L$, for some $\epsilon> 0 $, let $\gamma:(-\epsilon, 1+\epsilon)\rightarrow L$ be a smooth curve on $L$ connecting $p$ and $q$ with $\gamma(0)=p$ and $\gamma(1)=q$. Then we have
\[
\overline{g}\left(\nabla f+f\overline{\nabla}_NN, \frac{d\gamma}{d t}\right)=0,
\]
that is, 
\[
\frac{df(\gamma(t))}{dt}+f(\gamma(t))\overline{g}\left(
\overline{\nabla}_NN, \frac{d\gamma}{d t}\right)=0
\]     
on $t\in (-\epsilon,1+\epsilon)$. If we fix $\gamma(t)$, then $f(\gamma(t))$ is a solution of the linear differential equation
\[
\frac{dx}{dt}+x\overline{g}\left(
\overline{\nabla}_NN, \frac{d\gamma}{d t}\right)=0.
\]
As $f(\gamma(0))=0$, we get $f(\gamma(t))=0$ on $t\in (-\epsilon,1+\epsilon)$ by uniqueness of solutions. In particular, $f(q)=0$. As $q$ is any point of $L$, it follows that $f\equiv 0$, contradicting the fact that $V=fN$ is a non-trivial Jacob field. Hence we conclude that $f$ does not vanish.
\end{proof}

The following corollary is an immediate consequence of the Theorem \ref{Theo1}.

\begin{corollary}
	Let $\mathcal{F}$ be a minimal spacelike foliation by hypersurfaces on a Lorentz manifold $\overline{M}^{n+1}$. Then each leaf of $\mathcal{F}$ is stable.   
\end{corollary}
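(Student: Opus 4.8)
The plan is to read this corollary off directly from Theorem~\ref{Theo1}. A minimal foliation is precisely the extreme case of a constant mean curvature foliation: if every leaf of $\mathcal{F}$ is a minimal hypersurface, the mean curvature function of each leaf vanishes identically, so in particular all leaves share one and the same constant mean curvature, namely $H\equiv 0$. The hypothesis of Theorem~\ref{Theo1} is therefore met, and part~(1) of that theorem gives at once that each leaf of $\mathcal{F}$ is stable. That is the whole argument.

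If one prefers a self-contained derivation, the route is to repeat the computation in the proof of Theorem~\ref{Theo1} with $H\equiv 0$: substituting item~(2) of Proposition~\ref{propequation} into the second-variation functional $\mathcal{J}''(0)(f)$ and integrating by parts on the compact leaf $L$ collapses the a~priori indefinite integrand $f\Delta f-\|A\|^2f^2+\overline{\ric}(N)f^2$ into
\[
\mathcal{J}''(0)(f)=-\int_L\bigl\|\nabla f+f\,\overline{\nabla}_N N\bigr\|^2\,dM,
\]
the divergence term $\int_L\Div_L(f^2\overline{\nabla}_N N)\,dM$ vanishing because $f$ has compact support. Hence $\mathcal{J}''(0)(f)\le 0$ for every admissible $f$, i.e.\ $L$ is stable.

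There is no genuine obstacle here; the only point worth underlining is \emph{why} the conclusion needs no sign or pinching hypothesis on $\overline{\ric}(N)$: it is Proposition~\ref{propequation}(2) that converts $-\|A\|^2f^2+\overline{\ric}(N)f^2$ into a divergence plus a negative-definite square, and this conversion is exactly what survives the specialization from the general constant-mean-curvature case to the minimal case. So the corollary is a one-line instance of Theorem~\ref{Theo1}(1), with the displayed identity — already available from its proof — making the reduction transparent.
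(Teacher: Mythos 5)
Your proposal is correct and matches the paper exactly: the paper presents this corollary as an immediate consequence of Theorem~\ref{Theo1}, obtained by observing that a minimal foliation is the special case $H\equiv 0$ of a foliation whose leaves share the same constant mean curvature. Your optional self-contained rederivation of the identity $\mathcal{J}''(0)(f)=-\int_L\|\nabla f+f\,\overline{\nabla}_N N\|^2\,dM$ is consistent with the proof of that theorem and adds nothing contradictory.
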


\begin{corollary}
Let $\mathcal{F}$ be a closed spacelike foliation by hypersurfaces on a generalized Robertson–Walker spacetime $M^{n+1}=I\times_{\phi} F^n$. Assume that each leaf of $\mathcal{F}$ has the same constant mean curvature. If the warping function $\phi$ satisfies $\phi''\geq \max\{H\phi',0\}$, then each leaf $L\in \mathcal{F}$ is either maximal or a spacelike slice $L=\{t_0\}\times F^n$, for some $t_0\in I$.
\end{corollary}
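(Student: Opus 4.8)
The plan is to realize the ambient as $\overline{M}=I\times_{\phi}F$ with $\overline{g}=-dt^{2}+\phi(t)^{2}g_{F}$ and to use its two canonical global objects: the future unit timelike field $\partial_{t}$, whose integral curves are geodesics, and the closed conformal field $K=\phi(t)\partial_{t}$, for which $\overline{\nabla}_{X}K=\phi'(t)X$ for every $X$. The slices $\{t_{0}\}\times F$ are exactly the totally umbilical leaves of the canonical foliation, with mean curvature $\phi'(t_{0})/\phi(t_{0})$ relative to $\partial_{t}$. Since $\mathcal{F}$ is closed, each leaf $L$ is a compact connected spacelike hypersurface; to it I attach the height function $h:=t|_{L}$ and the angle function $\Theta:=\overline{g}(N,\partial_{t})$, orienting $\partial_{t}$ and $N$ so that $\Theta\leq -1$ on every leaf (the normalization for which the slices carry mean curvature $\phi'/\phi$). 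The reverse Cauchy--Schwarz inequality gives $|\Theta|\geq 1$ with $\Theta^{2}-1=|\nabla h|^{2}$, where $\nabla$ is the intrinsic gradient of $L$; hence $L$ is a slice precisely when $h$ is constant, i.e. when $\Theta\equiv -1$.

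The first step is the integral identity. Splitting $K=K^{\top}+K^{\perp}$ along a leaf $L$ and computing $\Div_{L}K^{\top}$ by means of $\overline{\nabla}_{X}K=\phi'X$, the Weingarten relation, and $\Div N=nH$ from Proposition~\ref{propequation}(1) — using crucially that $H$ is constant — one gets
\[
\Div_{L}K^{\top}=n\bigl(\phi'(h)+H\,\phi(h)\,\Theta\bigr);
\]
equivalently, if $\mathcal{G}$ is a primitive of $\phi$, then $K^{\top}=-\nabla(\mathcal{G}\circ h)$ and $\Delta_{L}(\mathcal{G}\circ h)=-n\bigl(\phi'(h)+H\phi(h)\Theta\bigr)$. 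Since $L$ is compact without boundary, integration yields
\[
\int_{L}\bigl(\phi'(h)+H\,\phi(h)\,\Theta\bigr)\,dM=0 .
\]
Side by side I would keep the Bochner-type formula $\Delta_{L}h=-\dfrac{\phi'(h)}{\phi(h)}\bigl(n+|\nabla h|^{2}\bigr)-nH\Theta$, obtained the same way from $\Delta_{L}(t|_{L})=\operatorname{tr}_{L}\overline{\operatorname{Hess}}\,t+nH\,N(t)$ and the warped-product form of $\overline{\operatorname{Hess}}\,t$.

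The second step is a case split. If $H=0$ then every leaf has $H_{L}=0$, i.e. is maximal, and we are done. So assume $H\neq 0$; I claim every leaf is a slice. If not, some leaf $L$ has $h$ nonconstant, hence attains its maximum at a point $p_{+}$ and its minimum at a point $p_{-}$ with $h(p_{-})<h(p_{+})$, and $\Theta(p_{\pm})=-1$ since $\nabla h$ vanishes there. Evaluating the Bochner formula at $p_{\pm}$ and using $\Delta_{L}h(p_{+})\leq 0\leq\Delta_{L}h(p_{-})$ gives two inequalities that sandwich $H$ between the values of $(\log\phi)'$ at $h(p_{-})$ and $h(p_{+})$; bringing in the hypothesis $\phi''\geq\max\{H\phi',0\}$ — which simultaneously controls the monotonicity of $\phi'$ and of $\phi'-H\phi$ — together with the integral identity forces $\phi'(h)-H\phi(h)$ to vanish identically along $L$, so that the integrand collapses to $H\phi(h)(1+\Theta)$, a quantity of constant sign whose integral is zero; hence $1+\Theta\equiv 0$, $h$ is constant, and $L$ is a slice, contradicting the choice of $L$. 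Thus every leaf is a slice, and in all cases each leaf of $\mathcal{F}$ is maximal or a spacelike slice $\{t_{0}\}\times F$.

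The computations producing $\Div_{L}K^{\top}$ and $\Delta_{L}h$ (Weingarten formula, warped-product connection and Hessian) are routine. The main obstacle is the closing argument of the second step: one must orient the extremal inequalities at $p_{\pm}$ in the right direction, reconcile them with the two monotonicities coming from $\phi''\geq\max\{H\phi',0\}$ — the auxiliary inequality $\phi''\geq 0$ being needed precisely so that the sign of $\phi'$ cannot spoil the comparison — and then exploit the fixed sign of $1+\Theta$ in the integral identity to eliminate every non-slice leaf. It is the interlocking of these three sign conditions that makes the argument work, and it is the reason the hypothesis is stated in the sharp form $\phi''\geq\max\{H\phi',0\}$ rather than simply $\phi''\geq H\phi'$.
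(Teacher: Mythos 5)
Your intermediate computations (the divergence of $K^{\top}$, the identities for $\Delta_L(\mathcal{G}\circ h)$ and $\Delta_L h$, and the inequalities at the extrema of $h$) are standard and correct, but the closing step is a genuine gap, and no argument along these lines can close it, because you never use that $L$ is a leaf of a CMC \emph{foliation} --- only that it is a single closed CMC spacelike hypersurface. The stronger statement your argument would establish, namely that every closed CMC spacelike hypersurface of $I\times_{\phi}F$ with $\phi''\geq\max\{H\phi',0\}$ is maximal or a slice, is false: in de Sitter space $\mathbb{S}^{n+1}_{1}=\mathbb{R}\times_{\cosh t}\mathbb{S}^{n}$ one has $\phi''=\cosh t\geq\max\{H\sinh t,0\}$ for every $|H|\leq 1$, and a ``tilted'' totally umbilical round sphere (a level set of $\langle x,a'\rangle$ for a timelike direction $a'$ different from the one defining the GRW splitting) is closed, has constant mean curvature $0<|H|<1$, and is neither maximal nor a slice; it satisfies all of your intermediate identities. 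Concretely, the step that fails is the claim that the hypotheses force $\phi'(h)-H\phi(h)\equiv 0$ on $L$: the extremal-point inequalities give only $(\phi'-H\phi)(h(p_{-}))\leq 0\leq(\phi'-H\phi)(h(p_{+}))$, and since $\phi''-H\phi'\geq 0$ makes $\phi'-H\phi$ nondecreasing, these endpoint signs are perfectly compatible with $\phi'-H\phi$ changing sign once inside the range of $h$. Your integral identity then equates the integral of a sign-changing term with that of a one-signed term, and no contradiction results.

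The missing ingredient is the strong stability of the leaves, which is precisely what Theorem \ref{Theo1} supplies: equation (\ref{SecVar}) gives $\mathcal{J}''(0)(f)=-\int_{L}\|\nabla f+f\overline{\nabla}_{N}N\|^{2}\,dM\leq 0$ for \emph{every} $f$, not only mean-zero ones (the tilted de Sitter sphere fails exactly this, for $f=1$). The paper's proof of the corollary is the one-line combination of Theorem \ref{Theo1} with Theorem 1.1 of \cite{barroscaminha}, which states that a closed, strongly stable, CMC spacelike hypersurface of a GRW spacetime with $\phi''\geq\max\{H\phi',0\}$ is either maximal or a slice; in that proof the warping condition is used after substituting a test function built from $\phi'(h)$ and the support function $\phi(h)\Theta$ into the stability inequality. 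To repair your argument you must inject the stability inequality (or some other consequence of $L$ being a leaf, e.g.\ the identity involving $\overline{\nabla}_{N}N$ from Proposition \ref{propequation}) into the final step; compactness and constancy of $H$ alone are not enough.
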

\begin{proof}
The result follwing of the Theorem \ref{Theo1} and Theorem 1.1 in \cite{barroscaminha}.
\end{proof}


\section{Totally Geodesic Foliations at Infinity}

In \cite{caminhayuri}, the authors considered complete noncompact hypersurfaces of Riemannian manifolds, transversals to a Killing vector field of the constant norm and with nonnegative second fundamental form and proved that it is totally geodesic. 

Let $(\overline{M}^{n+1},\overline{g})$ be a Riemannian manifold endowed with a non-trivial Killing field $\xi$, it is known that the $n$-distribution 
\[
p\in\overline{M}\longmapsto\mathcal{D}_{\xi}(p)=\{v\in T_p\overline{M}:\,\overline{g}(\xi(p),v)=0\}
\]
in general, does not define a global foliation on $\overline{M}$. On the other hand, Montiel (\cite{montielfoliation} and \cite{montiellorentz}) proved that a field that is closed and conformal determines a codimension one umbilical foliation. More specifically, let $\overline{M}^{n+1}$ be a Riemannian manifold endowed with a non-trivial field $\xi$ which is closed and conformal. Then we have that
\begin{itemize}
	\item the conforming factor is $\sigma=\frac{1}{n+1}{\rm div}\,\xi$;
	
	\item the $n$-dimensional distribution $\mathcal{D}_{\xi}$ determines a codimension one umbilical Riemannian foliation $\mathcal{F}(\xi)$ which is oriented by $\mathcal{N}= \frac{\xi}{\|\xi\|}$. Moreover, the functions $\|\xi\|$ and ${\rm div}\,\xi$ are constant on connected leaves of $\mathcal{F}(\xi)$ and each leaf has constant mean curvaure $H_{\xi}=-\frac{{\rm div}\,\xi}{(n+1)\|\xi\|}=- \frac{\sigma}{\|\xi\|}$.
\end{itemize}

\begin{observation}Similar result is valid in the Lorentzian case, for more details see \cite{montiellorentz}.
\end{observation}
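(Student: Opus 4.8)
The plan is to convert the closed--conformal condition into a pointwise equation for the tangential part $w$ of $\xi$ along $L$, and then run a Bochner argument on the compact leaf. Since $\xi$ is closed and conformal we have $\overline{\nabla}_X\xi=\sigma X$ for every $X$, with $\sigma=\tfrac{1}{n+1}\Div\xi$. Put $\rho=\overline{g}(\xi,N)$ and decompose $\xi=w-\rho N$ along $L$ (using $\overline{g}(N,N)=-1$), so that $w$ is exactly the tangential component appearing in the statement. Differentiating this decomposition along $X\in\mathfrak{X}(L)$ with the Gauss formula $\overline{\nabla}_XY=\nabla_XY-\overline{g}(AX,Y)N$ and the Weingarten formula $\overline{\nabla}_XN=-AX$, and comparing with $\overline{\nabla}_X\xi=\sigma X$, the tangential part yields the key identity
\[
\nabla_Xw=\sigma X-\rho\,AX,\qquad X\in\mathfrak{X}(L),
\]
while the normal part gives the auxiliary relation $\nabla\rho=-Aw$. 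The hypothesis that $\xi$ is not tangent to $L$ enters only here: it forces $\xi$ to be nowhere zero on $L$ and hence $\rho=\overline{g}(\xi,N)$ to be nowhere vanishing.

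Next I would isolate the umbilicity defect. Let $\Phi:=A+H\,\mathrm{Id}$, which is trace-free because $\operatorname{tr}A=-nH$; thus $L$ is totally umbilical precisely when $\Phi\equiv 0$. Taking the trace of the identity above gives $\Div w=n(\sigma+\rho H)$, so it rewrites as
\[
\nabla_Xw=\frac{\Div w}{n}\,X-\rho\,\Phi X.
\]
Two observations follow. First, since $A$, hence $\Phi$, is $\overline{g}$-symmetric, the bilinear form $(X,Y)\mapsto\overline{g}(\nabla_Xw,Y)$ is symmetric, so the $1$-form $w^{\flat}=\overline{g}(w,\cdot)$ is closed on $L$. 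Second, evaluating in an orthonormal frame $\{e_1,\dots,e_n\}$ tangent to $L$ and using $\operatorname{tr}\Phi=0$ to cancel the cross term,
\[
\|\nabla w\|^2=\frac{(\Div w)^2}{n}+\rho^2\,\|\Phi\|^2 .
\]

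Finally I would apply Bochner on the compact Riemannian manifold $(L,\overline{g}_{|L})$ (Riemannian because $L$ is spacelike). Integrating the identity $\tfrac12\Delta\|w\|^2=\|\nabla w\|^2+\overline{g}\big(\nabla(\Div w),w\big)+\ric(w,w)$, which is valid since $w^{\flat}$ is closed, over $L$ and using $\int_L\overline{g}(\nabla(\Div w),w)\,dM=-\int_L(\Div w)^2\,dM$, one obtains
\[
\int_L\Big(\|\nabla w\|^2-(\Div w)^2+\ric(w,w)\Big)\,dM=0 .
\]
Substituting the expression for $\|\nabla w\|^2$ reduces this to
\[
\int_L\rho^2\,\|\Phi\|^2\,dM=\int_L\Big(\frac{n-1}{n}(\Div w)^2-\ric(w,w)\Big)\,dM\le 0 ,
\]
the last inequality being exactly the assumed curvature bound. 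As the left-hand integrand is nonnegative it must vanish identically, and since $\rho$ is nowhere zero, $\Phi\equiv 0$; hence $A=-H\,\mathrm{Id}$ and $L$ is totally umbilical. The computations here are routine; the points that need care are the signs in the Lorentzian Gauss--Weingarten formulas that produce $\nabla_Xw=\sigma X-\rho AX$, the verification that $w^{\flat}$ is closed so that the plain Bochner identity (with no extra Weitzenböck curvature operator on $1$-forms) applies, and the bookkeeping that the $\tfrac1n(\Div w)^2$ coming from the trace part of $\nabla w$ leaves precisely $\tfrac{n-1}{n}(\Div w)^2$ — so the hypothesis on $\ric(w,w)$ is exactly what makes the inequality close. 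There is no serious analytic obstacle, since $L$ is compact and boundaryless.
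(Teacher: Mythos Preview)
Your argument does not address the stated Observation at all. The Observation is the one-line remark, placed immediately after the two bulleted facts about Montiel's Riemannian result, that the same structural statement carries over to the Lorentzian setting: if $\xi$ is a nontrivial timelike closed conformal field on $\overline{M}$, then $\sigma=\tfrac{1}{n+1}\Div\xi$, and the distribution $\mathcal{D}_\xi=\xi^{\perp}$ integrates to a codimension-one umbilical foliation oriented by $\mathcal{N}=\xi/\|\xi\|$ whose leaves have constant mean curvature $H_\xi=-\sigma/\|\xi\|$. The paper gives no proof of this; it simply refers the reader to Montiel. A direct check is in any case immediate from $\overline{\nabla}_X\xi=\sigma X$: for $X$ tangent to a leaf of $\mathcal{D}_\xi$ one has $-\overline{\nabla}_X\mathcal{N}=-\tfrac{\sigma}{\|\xi\|}X$, which is already a multiple of the identity. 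No compactness, no Ricci hypothesis, and no Bochner formula are involved.

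What you have actually written is a proof of the Theorem in Section~5: if $\xi$ is closed conformal and nowhere tangent to a compact connected spacelike hypersurface $L$, and the intrinsic Ricci curvature satisfies $\ric(w,w)\ge\tfrac{n-1}{n}(\Div w)^2$ with $w=\xi^{\top}$, then $L$ is totally umbilical. For that theorem your Bochner route (observing that $w^{\flat}$ is closed, computing $\|\nabla w\|^2=\tfrac{1}{n}(\Div w)^2+\rho^2\|\Phi\|^2$, and integrating the Bochner identity) is a legitimate alternative to the paper's approach, which instead computes $\Delta f$ for $f=-\overline{g}(\xi,N)$, derives a Ricci identity for $w$, and combines them into the integral formula $\int_L f^2(\|A\|^2-nH^2)=\int_L\big[\tfrac{n-1}{n}(\Div w)^2-\ric(w,w)\big]$. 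Both arrive at the same inequality. But that theorem is not the Observation you were asked about.
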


Motivated by these works, our next results provide a characterization (in the cases Riemannian and Lorentzian) of a foliation that is transversal to a closed conformal field. For this, we recall some definitions and basic results.

Let $(\overline{M}^{n+1},\overline{g})$ be a Riemannian manifold. A vector field $\xi$ on $\overline{M}$ is said to be closed and conformal if there is a function $\sigma\in C^{\infty}(\overline{M})$ such that
\begin{equation}\label{eqconformal}
\overline{\nabla}_{X}\xi=\sigma X,\,\,\forall\,X\in \mathfrak{X}(\overline{M}).
\end{equation}
Let's assume that $\xi$ is a non-vanishing closed conformal field of constant norm. Replacing $\xi$ by $\frac{\xi} {\|\xi\|}$, if necessary, we can assume that $\|\xi\|=1$, and we do so hereafter.

If $(\overline{M},\mathcal{F},\overline{g},N)$ is a foliation of $\overline{M}$ transversal to $\xi$, so we can assume that
$g(N,\xi)>0$. We let $\theta:\overline{M}\rightarrow [0,\pi/2)$ denote the acute angle between $N$ and $\xi$ at each point, given by the equality
\[
\overline{g}(N,\xi)=\cos \theta.
\]
We say that $N$ converges to $\xi$ at infinity provided $\theta$ converges to $0$ at infinity.

In the result quoted below we consider a foliation $(\overline{M},\mathcal{F},\overline{g},N)$ that is transversal to a closed conformal field $\xi$, assuming that $N$ converges to $\xi$ at infinity and other geometric constraints we proved that $\mathcal{F}$ is totally geodesic.

\begin{theorem}\label{FolTolUmb}
Let $(\overline{M}^{n+1},\mathcal{F},\overline{g},N)$ be a foliation of a Riemannian manifold. Assume that $\mathcal{F}$ is transversal to a closed conformal field $\xi$ with constant norm $1$ and oriented by the choice of $N$ such that $ g(N,H_{\xi}\cdot\xi)\leq0$. If the second fundamental form $A$ of $L\in \mathcal{F}$ with respect to $N$ is nonnegative and $N$ converges to $\xi$ at infinity, then $\mathcal{F}$ is a totally geodesic foliation.
\end{theorem}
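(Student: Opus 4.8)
The plan is to exploit the relation between the foliation and the ambient from Proposition \ref{propequation}, together with the transversality to $\xi$, to extract a divergence identity that is then integrated against a suitable test function — a strategy reminiscent of the Omori--Yau maximum principle used when the leaves are noncompact. First I would unwind what the hypotheses give pointwise. Since $\xi$ is closed conformal of unit norm, $\overline\nabla_X\xi=\sigma X$ with $\sigma=\frac{1}{n+1}\operatorname{div}\xi$, and each leaf of the Montiel foliation $\mathcal F(\xi)$ is umbilic with $H_\xi=-\sigma/\|\xi\|=-\sigma$; meanwhile the leaves of our given $\mathcal F$ are only transversal to $\xi$, so the decomposition $\xi = \cos\theta\, N + \xi^{\top}$ along a leaf $L\in\mathcal F$ produces the tangential part $\xi^{\top}$ whose divergence along $L$ is controlled by $\sigma$, $\theta$, and the second fundamental form $A$ of $L$. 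Writing $u=\cos\theta=\overline g(N,\xi)$, I would compute $\operatorname{grad}_L u$ and $\Delta_L u$ (or $\Div_L\xi^{\top}$), obtaining an identity of the schematic form $\Div_L\xi^{\top} = n\sigma - nHu + \overline g(A(\xi^{\top}),\xi^{\top})/u$ or similar, where the sign of the last term is governed by $A\ge 0$ and the orientation hypothesis $g(N,H_\xi\cdot\xi)\le 0$, i.e.\ $Hu\cdot\sigma \le 0$ after suitable reductions.

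Next I would assemble this with item (2)--(3) of Proposition \ref{propequation}, which relate $\Div_L(\overline\nabla_N N)$ and $\|A\|^2$, $\overline{\ric}(N)$, $N(H)$. Along $\mathcal F$ one has $\overline\nabla_N N = -\operatorname{grad}_L(\log u) + (\text{normal part})$ because $N$ is (up to scaling) the normalized gradient of the function distinguishing the leaves; combining this with the $u$-identity should yield a single differential inequality for $u$ of the form $\Delta_L u \le -\|A\|^2 u + (\text{nonpositive terms})$, valid on each leaf, with the key feature that all error terms have a definite sign once $A\ge 0$ and $H_\xi\overline g(N,\xi)\le 0$ are invoked. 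The point of the convergence hypothesis "$N\to\xi$ at infinity" is precisely that $u\to 1$ and $\operatorname{grad}_L u\to 0$ at infinity, so $u$ attains (a generalized) supremum; applying the weak maximum principle / Hopf-type argument on the (complete, noncompact) leaf forces $\|A\|^2\,u\to 0$ along a sequence, and then the sign-definiteness upgrades this to $A\equiv 0$ on $L$. Since $L\in\mathcal F$ was arbitrary, $\mathcal F$ is totally geodesic.

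The main obstacle I anticipate is the bookkeeping in the derivation of the $u$-identity: one must carefully track how $\overline\nabla_N N$ decomposes relative to \emph{both} the leaf $L\in\mathcal F$ and the ambient umbilic foliation $\mathcal F(\xi)$, since $N$ and $\mathcal N=\xi$ are generally not parallel, and getting the correct sign in the cross term $\overline g(A(\xi^\top),\xi^\top)$ versus $\overline g(N, H_\xi\xi)$ is what makes the inequality one-sided. A secondary technical point is justifying the maximum-principle step: the leaves are complete noncompact but a priori we do not have a Ricci lower bound, so I would either argue that the hypotheses (nonnegative $A$, boundedness coming from $\theta<\pi/2$) give enough control to run an Omori--Yau-type argument, or, following the approach of \cite{caminhayuri}, use a cutoff/exhaustion together with the convergence of $\theta$ at infinity to pass to the limit directly in the integrated Bochner-type identity. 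Once the totally geodesic conclusion is in hand, the Bernstein corollary for $\mathbb R^{n+1}$ follows by taking $\xi$ a constant unit vector ($\sigma\equiv 0$, $H_\xi\equiv 0$) and noting that a totally geodesic entire graph is an affine hyperplane, necessarily orthogonal to $\xi$ by the convergence hypothesis.
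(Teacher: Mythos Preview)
Your proposal has a genuine gap: the route you outline through Proposition~\ref{propequation} and a Laplacian/Bochner inequality for $u=\overline g(N,\xi)$ drags in quantities that the hypotheses do not control. Item (2) of Proposition~\ref{propequation} involves $\overline{\ric}(N)$, $N(H)$, and $\|\overline\nabla_N N\|^2$, and nothing in the statement bounds the ambient Ricci curvature or the normal variation of $H$. Your claimed relation $\overline\nabla_N N=-\operatorname{grad}_L(\log u)+(\text{normal part})$ is not a general identity either: one computes directly that $\operatorname{grad}_L u=-A\xi^{\top}$, which has no a priori link to $\overline\nabla_N N$. So the ``nonpositive error terms'' in your schematic inequality $\Delta_L u\le -\|A\|^2 u+\cdots$ cannot be made nonpositive from the stated hypotheses, and the Omori--Yau step would have nothing to bite on even if a Ricci lower bound were available.

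The paper's argument bypasses all of this by working purely at first order. With $f=1-\overline g(N,\xi)\ge 0$ one has $\overline g(\nabla f,\xi^{\top})=\overline g(A\xi^{\top},\xi^{\top})\ge 0$ from $A\ge 0$, and a short divergence computation gives $\operatorname{div}_L\xi^{\top}=n\sigma+\overline g(\xi,N)\,\mathrm{tr}(A)$, which is nonnegative by the orientation hypothesis $\overline g(N,H_\xi\xi)\le 0$ (equivalently $\sigma\ge 0$) together with $A\ge 0$. Since $f\to 0$ at infinity, the maximum principle at infinity of \cite[Theorem~2.2]{caminhayuri} applies \emph{directly} to the pair $(f,\xi^{\top})$ and forces $\overline g(A\xi^{\top},\xi^{\top})=0$ and $\operatorname{div}_L\xi^{\top}=0$ off $f^{-1}(0)$; hence $\mathrm{tr}(A)=0$ and then $A=0$ by nonnegativity. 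No Laplacian, no Proposition~\ref{propequation}, and no ambient curvature enter. The moral is that the correct object here is the directional derivative $\xi^{\top}(f)$ together with $\operatorname{div}_L\xi^{\top}$, not $\Delta_L u$.
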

\begin{proof}
Define $f=1-\overline{g}(N,\xi)$ on $\overline{M}$, and note that $f\geq 0$. If $f\equiv 0$, then $N$ is identically to $\xi$. In this case, for all $p\in L\in \mathcal{F}$ and  $u, v\in T_pL$ we have that 
	\begin{eqnarray*}
		\overline{g}(A(u),v) & = & -\overline{g} (\overline{\nabla}_uN,v)\\
		& = & -\overline{g}(\overline{\nabla}_u\xi,v)\\
		& = & \overline{g}(-\sigma Iu,v),
	\end{eqnarray*}
thus $A=-\sigma I$. Proving that $L$ is a totally umbilical foliation. Therefore, we may assume that $f$ does not vanish identically. Let $\xi^{\top}$ stand for the orthogonal projection of $\xi$ on $\mathcal{F}$. Since $\xi$ is closed and conformal vector field, we obtain
	\begin{eqnarray*}
		\overline{g}(\nabla f,\xi^{\top}) & = & \xi^{\top}(f)\\
		& = & -\overline{g}(\nabla_{\xi^{\top}}N,\xi)-g(N, \nabla_{\xi^{\top}}\xi)\\
		& = & \overline{g}(A(\xi^{\top}),\xi^{\top}).
	\end{eqnarray*}
The expressions above, together with the nonnegativity of $A$ give
	\[
	\overline{g}(\nabla f,\xi^{\top})\geq0.
	\]
Now note that
	\begin{eqnarray*}
		\mbox{div}_{\overline{M}}(\xi) & = & \mbox{div}_{\overline{M}}(\xi^{\top})+\mbox{div}_{\overline{M}}(\xi^{\bot})\\
		& = & \mbox{div}_{\mathcal{F}}(\xi^{\top})+g(\nabla_{N}\xi,N) +\mbox{div}_{\mathcal{F}}(\xi^{\bot})\\
		& = & \mbox{div}_{\mathcal{F}}(\xi^{\top})+\sigma-g(\xi,N)
		\mbox{tr}(A).
	\end{eqnarray*}
On the other hand, $\mbox{div}_{\overline{M}}(\xi)= (n+1)\sigma$. Therefore, 
	\[
	\mbox{div}_{\mathcal{F}}(\xi^{\top})=n\sigma+\overline{g}
	(\xi,N)\mbox{tr}(A).
	\]
	
By assumpition we have that $H_{\xi}\cdot g(N,\xi)\leq0$, this fact, together with the choice of $N$ and the $H_{\xi}= -\sigma$, give
	\[
	\overline{g}(\xi,N)>0,\,\, \sigma\geq0\,\, {\rm and}\,\,
	\mbox{div}_{\mathcal{F}}(\xi^{\top})\geq0.
	\]
	
Moreover, since $N$ converges to $\xi$ at infinity, we get that $f$ converges to $0$ at infinity. Then, by [Theorem 2.2, \cite{caminhayuri}] we conclude that $\overline{g}(\nabla f,\xi^{\top})=0$ on each leaf $L\in \mathcal{F}$ and $\mbox{div}_{\mathcal{F}}(\xi^{\top})= 0$ on $L\setminus f^{-1}(0)$. Since $g(N,\xi)>0$ on $L$, we conclude that $\mbox{tr}(A)=0$ and $\sigma=0$ on $L\setminus f^{-1}(0)$, and hence $A=0$ on $L\setminus f^{-1}(0)$.
	
Now, note that $f^{-1}(0)=\{p\in L\in \mathcal{F}; N(p)=\xi(p)\}$. If $p$ is in the interior of $f^{-1}(0)$, then $N=\xi$ in a neighborhood of $p$, whence $A= -\sigma I=0$ in such a neighborhood. In particular, $A=0$ in the interior of $f^{-1}(0)$ and, since it already vanishes in $L\setminus f^{-1}(0)$, we conclude that $A=0$ on all of $L$. Therefore, $L$ is totally geodesic.
\end{proof}

Note that essentially the same arguments presented in the proof of the previous result, with minor adaptations, for example the acute angle between $N$ and $\xi$ is defined using the hyperbolic cosine, allows us to approach the case of a complete noncompact spacelike foliation on a Lorentzian ambient space. Even unnecessarily, we emphasize that we consider the completeness of the leaves and not of the Lorentzian ambient space.

\begin{theorem}
Let $(\overline{M}^{n+1},\mathcal{F},\overline{g},N)$ be a foliation of a Lorentz manifold. Assume that $\mathcal{F}$ is transversal to a timelike closed conformal field $\xi$ with constant norm $1$ and oriented by the choice of $N$ such that $ g(N,H_{\xi}\cdot\xi)\geq0$. If the second fundamental form $A$ of $L\in \mathcal{F}$ with respect to $N$ is nonnegative and $N$ converges to $\xi$ at infinity, then $\mathcal{F}$ is a totally geodesic foliation.
\end{theorem}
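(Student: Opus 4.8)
The plan is to run, \emph{mutatis mutandis}, the proof of Theorem~\ref{FolTolUmb}, the only changes being the replacement of the cosine of the angle between $N$ and $\xi$ by its hyperbolic analogue and a careful tracking of the signs produced by the Lorentzian signature and by $\overline{g}(\xi,\xi)=\overline{g}(N,N)=-1$. Since $N$ and $\xi$ are timelike unit vector fields in the same timecone, $\overline{g}(N,\xi)\le -1$; write $\overline{g}(N,\xi)=-\cosh\theta$ with $\theta\colon\overline{M}\to[0,\infty)$, so that ``$N$ converges to $\xi$ at infinity'' means exactly $\theta\to 0$, i.e. $f:=-\overline{g}(N,\xi)-1=\cosh\theta-1\ge 0$ converges to $0$ at infinity along each leaf. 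As in Theorem~\ref{FolTolUmb} one first records, via the Lorentzian form of Montiel's theorem (see the Observation above and \cite{montiellorentz}), that $\xi$ has a conformal factor $\sigma$ with $H_{\xi}=-\sigma$, and that the orientation hypothesis $\overline{g}(N,H_{\xi}\,\xi)\ge 0$ together with $\overline{g}(N,\xi)=-\cosh\theta<0$ forces $\sigma\ge 0$ on $\overline{M}$. If $f\equiv 0$ on a leaf $L$, then $N\equiv\xi$ there and $A(u)=-\overline{\nabla}_uN=-\overline{\nabla}_u\xi=-\sigma u$ for $u\in TL$, so $A=-\sigma\,\mathrm{Id}$; since $A\ge 0$ forces $\sigma\le 0$ while $\sigma\ge 0$, we get $A\equiv 0$ on $L$. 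Hence it suffices to treat leaves on which $f$ does not vanish identically.

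Fixing such a leaf $L$ (a complete Riemannian manifold, so that its shape operator $A$ is symmetric on each Euclidean tangent space), I would next reproduce the two computations of Theorem~\ref{FolTolUmb}. Writing $\xi^{\top}$ for the orthogonal projection of $\xi$ onto $L$ and using $\overline{\nabla}_X\xi=\sigma X$, $\overline{g}(N,\xi^{\top})=0$ and $A(TL)\subseteq TL$, one gets
\[
\overline{g}(\nabla f,\xi^{\top})=\xi^{\top}(f)=\overline{g}\bigl(A(\xi^{\top}),\xi^{\top}\bigr)\ge 0 .
\]
Then, computing $\Div\xi$ in an adapted frame with $\epsilon_{n+1}=-1$ to recover $\Div\xi=(n+1)\sigma$, splitting $\xi=\xi^{\top}+\xi^{\bot}$ with $\xi^{\bot}=\cosh\theta\cdot N$, and applying Proposition~\ref{propequation}(1), one arrives at
\[
\Div_{\mathcal{F}}(\xi^{\top})=n\sigma-\overline{g}(\xi,N)\,\mathrm{tr}(A)=n\sigma+\cosh\theta\cdot\mathrm{tr}(A)\ge 0 ,
\]
using $\sigma\ge 0$ and $\mathrm{tr}(A)\ge 0$. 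This is the exact Lorentzian counterpart of the corresponding inequality in Theorem~\ref{FolTolUmb}: the sign reversal in $\overline{g}(\xi,N)$ is compensated by the sign reversal in the orientation hypothesis, so the inequalities keep pointing the same way.

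Finally I would invoke the maximum principle at infinity leafwise: $L$ is a complete noncompact Riemannian manifold, the quantities $f|_L$, $\xi^{\top}$, $\overline{g}(\nabla f,\xi^{\top})$ and $\Div_{\mathcal{F}}(\xi^{\top})$ are all intrinsic to $L$, $f|_L\ge 0$ and $f\to 0$ at infinity, so [Theorem~2.2, \cite{caminhayuri}] gives $\overline{g}(\nabla f,\xi^{\top})\equiv 0$ on $L$ and $\Div_{\mathcal{F}}(\xi^{\top})\equiv 0$ on $L\setminus f^{-1}(0)$; from the displayed identity, $n\sigma+\cosh\theta\cdot\mathrm{tr}(A)=0$ with both summands nonnegative forces $\sigma=0$ and $\mathrm{tr}(A)=0$ there, hence $A=0$ on $L\setminus f^{-1}(0)$ since $A$ is symmetric, nonnegative and traceless. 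On $\mathrm{int}\,f^{-1}(0)$ one has $N\equiv\xi$, so $A=-\sigma\,\mathrm{Id}$, and again $A\ge 0$ with $\sigma\ge 0$ gives $A=0$. Thus $A$ vanishes off the nowhere dense set $\partial(f^{-1}(0))$, hence on all of $L$ by continuity, and since $L\in\mathcal{F}$ was arbitrary, $\mathcal{F}$ is totally geodesic. I expect no genuine obstacle beyond bookkeeping: the one point requiring care is that [Theorem~2.2, \cite{caminhayuri}] is a Riemannian statement being applied on each leaf, which is legitimate precisely because the leaves of a spacelike foliation are Riemannian and are assumed complete; everything else is the sign analysis already flagged above.
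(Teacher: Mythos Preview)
Your proposal is correct and follows precisely the approach the paper indicates: the paper does not write out a separate proof for this Lorentzian statement but simply remarks that the argument of Theorem~\ref{FolTolUmb} carries over with the hyperbolic cosine replacing the cosine, and your write-up executes exactly that adaptation with the expected sign bookkeeping. If anything, your treatment is slightly more complete than the paper's Riemannian proof, since you explicitly argue why $\sigma=0$ (and hence $A=0$) on the interior of $f^{-1}(0)$ via the competing inequalities $A\ge 0$ and $\sigma\ge 0$, a point the paper's proof leaves implicit.
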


We present a relevant particular case of Theorem \ref{FolTolUmb}, a Bernstein-type result, quoted below.

\begin{corollary}
Let $\mathcal{F}$ be a foliation by entire graphics on Euclidian space $\mathbb{R}^{n+1}$. If the second fundamental form of the foliation $\mathcal{F}$ with respect to the upward-pointing unit normal vector field $N$ is nonnegative and $N$ converges to a fixed vector $\xi$ at infinity, then each leaf of $\mathcal{F}$ is a hyperplane orthogonal to $\xi$.
\end{corollary}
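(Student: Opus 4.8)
The plan is to obtain this statement as a direct specialization of Theorem \ref{FolTolUmb} to the flat ambient $\overline{M}^{n+1}=\mathbb{R}^{n+1}$. After normalizing so that $|\xi|=1$, I would regard $\xi$ as the constant (parallel) vector field on $\mathbb{R}^{n+1}$ equal to this fixed vector; then $\overline{\nabla}_X\xi=0$ for every $X$, so $\xi$ is closed and conformal with conformal factor $\sigma\equiv 0$, and hence $H_{\xi}=-\sigma/|\xi|\equiv 0$. Consequently the orientation hypothesis $g(N,H_{\xi}\cdot\xi)\le 0$ of Theorem \ref{FolTolUmb} holds trivially.

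Next I would check the remaining hypotheses. Transversality of $\mathcal{F}$ to $\xi$, i.e. $\overline{g}(N,\xi)>0$ along the foliation, should follow from the leaves being entire graphs with upward-pointing unit normal: writing a leaf as the graph of $u\colon\mathbb{R}^n\to\mathbb{R}$ one has $N=(1+|\nabla u|^2)^{-1/2}(-\nabla u,1)$, which has a positive vertical component everywhere, and since $N$ converges to the \emph{fixed} vector $\xi$ at infinity this forces $\xi$ to point in the upward direction and $\overline{g}(N,\xi)>0$. The nonnegativity of $A$ and the convergence $N\to\xi$ at infinity are hypotheses, and the leaves carry a complete induced metric (an entire graph over $\mathbb{R}^n$ is complete), so the maximum-principle-at-infinity argument behind Theorem \ref{FolTolUmb} applies and yields that $\mathcal{F}$ is a totally geodesic foliation.

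Finally I would translate "totally geodesic" into the stated conclusion. If a leaf $L$ satisfies $A\equiv 0$, then $N$ is parallel along $L$, hence constant on the connected hypersurface $L$; since $N\to\xi$ at infinity, $N\equiv\xi$ on $L$, so $L$ is an open subset of the affine hyperplane orthogonal to $\xi$ through any of its points. Being an entire graph over $\mathbb{R}^n$, $L$ is an $n$-dimensional connected closed subset of that hyperplane, hence coincides with it. Thus every leaf of $\mathcal{F}$ is a hyperplane orthogonal to $\xi$.

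The main obstacle is the transversality verification: one must make sure that the combination "entire graph $+$ upward normal $+$ convergence of $N$ to a single fixed vector" genuinely forces $\overline{g}(N,\xi)>0$ everywhere (ruling out degenerate behaviour such as $\xi$ being horizontal), since without transversality Theorem \ref{FolTolUmb} does not apply. Everything else is bookkeeping: checking that the constant vector field is closed conformal with vanishing conformal factor, that the sign hypothesis becomes vacuous, and that the totally geodesic conclusion upgrades to "hyperplane" for entire graphs.
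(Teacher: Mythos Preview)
Your approach is exactly what the paper intends: the corollary is stated there without proof, merely as a ``relevant particular case of Theorem \ref{FolTolUmb}'', obtained by taking $\xi$ to be a constant (hence parallel, closed conformal with $\sigma\equiv 0$ and $H_\xi\equiv 0$) unit vector field on $\mathbb{R}^{n+1}$. The verifications you supply---that the sign condition becomes vacuous, that entire graphs are complete, and that a totally geodesic entire graph with $N\to\xi$ must be a hyperplane orthogonal to $\xi$---are precisely the details the paper leaves implicit; the transversality issue you flag as the main obstacle is likewise not addressed in the paper.
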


We emphasize that the non-negativity hypothesis of the second fundamental form in the last result is essential. In fact, without this hypothesis the result is not valid as we will see in the example below.

\begin{example}
For each $c\in\Bbb{R}$, consider the parameterized surface $\Psi_c:\Bbb{R}^2 \to\Bbb{R}^3$ defined by $\Psi_c(u,v)= (u,u^3+c,v)$. Let $L_c=\Psi(\Bbb{R}^2)$, then $\mathcal{F}:= \{L_c:\,c\in\Bbb{R}\}$ is a foliation of $\Bbb{R}^3$ by entire graphics. It is not difficult to verify that $\mathcal{F}$ is oriented by unit normal vector field 
\[
N=\frac{1}{\sqrt{1+9u^4}}(3u^2,-1,0).
\]
Now, consider the fixed vector $\xi=(1,0,0)$. Note that $N$ converges to $\xi$ at infinity, however the leaves of $\mathcal{F}$ are not hyperplanes orthogonal to $\xi$. This fact occurs because the second fundamental form of foliation does not satisfy the non-negativity hypothesis. In fact, the principal curvatures of the leaves are
\[
k_1=0\,\,\, {\rm and}\,\,\, k_2=\frac{6u}{(1+9u^4)^{3/2}}.
\]	
\end{example}


\section{Umbilical Spacelike Foliation on Lorentz Manifold}

Let $(\overline{M},\overline{g})$ be a $(n+1)$-dimensional Lorentz manifold and $\xi $ be a timelike closed conformal vector field on $\overline{M}$. Let $(\overline{M}^{n+1}, \mathcal{F}, \overline{g},N)$ be a foliation. In this section we will prove umbilicity results for the foliation $\mathcal{F}$, for this we will consider each leaf $L$ of $\mathcal{F}$ immersed as a hypersurface in $\overline{M}$.

Using equation \eqref{eqconformal}, we see that the curvature tensor field $\overline{R}$ of $\overline{M}$ is given by%
\begin{equation}
\overline{R}(U,V)W=U(\sigma )V-V(\sigma )U\text{,\quad }U,V,W\in \mathfrak{X}(\overline{M})\text{.}
\end{equation}%
Consequently, the Ricci tensor of the Lorentz manfold $\overline{\ric}$
satisfies%
\begin{equation}
\overline{\ric}\left( V,\xi \right) =-(n-2)V\left( \sigma \right) \text{.}
\end{equation}%
Now, let $L\in\mathcal{F}$ be spacelike hypersurface of $\overline{M}$. Then we have the following
fundamental equations for the hypersurface $L$
\begin{equation}\label{eqgauss}
\overline{\nabla }_{X}Y=\nabla _{X}Y-g(AX,Y)N\text{,\quad }\overline{\nabla }_{X}N=-AX\text{,\quad }X,Y\in \mathfrak{X}(L)\text{,}
\end{equation}%
where $g$ is the induced Riemannian metric and $\nabla $ is the Levi-Civita connection on the hypersurface $L$. We assume that the conformal vector field $\xi $ is not tangent to the hypersurface and therefore, we have the following
\begin{equation}\label{eqdecomposition}
\xi =w+fN\text{,}
\end{equation}%
where $w\in \mathfrak{X}(L)$ is the tangential projection of $\xi $ on the hypersurface $L$ and $f=-\overline{g}\left( \xi ,N\right) $ is a smooth function on $L$. We denote the restriction of the conformal factor $\sigma $ (see \eqref{eqconformal}) to $L$ by the same letter $\sigma $. Taking covariant derivative in equation \eqref{eqdecomposition} with respect to $X\in \mathfrak{X}(L)$ and using equation \eqref{eqgauss}, we have
\begin{equation}\label{eqsigma}
\nabla_{X} w=\sigma X+fAX\text{,\quad } \grad f=Aw\text{.}
\end{equation}%
Using equation \eqref{eqgauss}, we have
\begin{equation}\label{eqcurvature}
\overline{R}(X,Y)N=-\left( \nabla A\right) (X,Y)+\left( \nabla A\right) (Y,X)
\text{,\quad }X,Y\in \mathfrak{X}(L)\text{,}
\end{equation}%
where the covariant derivative $\left( \nabla A\right) (X,Y)=\nabla
_{X}AY-A\left( \nabla _{X}Y\right) $.

Using a normal coordinates frame $\left\{e_{1},..., e_{n} \right\} $ on $L$, we have for $X\in \mathfrak{X}(L)$%
\[
nX\left( H \right) =\sum\limits_{i=1}^{n}g\left( \left( \nabla
A\right) (X,e_{i}),e_{i}\right) 
\]%
and using equation \eqref{eqcurvature}, we have
\begin{eqnarray}\label{eqmeancurvature}
nX\left( H \right) &=&\sum\limits_{i=1}^{n}g\left( \left(\nabla A\right) (e_{i},X)-\overline{R}(X,e_{i})N, e_{i}\right)\\ 
&=&\overline{\ric}\left( X,N\right) +\sum\limits_{i=1}^{n} g\left( \left( \nabla A\right) (e_{i},e_{i}),X\right) \text{.} \nonumber
\end{eqnarray}
Next, we compute $\Div Aw$ using equation \eqref{eqsigma} to arrive at%
\[
\Div Aw=\sum\limits_{i=1}^{n}g\left( \nabla _{e_{i}}Aw,e_{i}\right)
=\sum\limits_{i=1}^{n}g\left( \left( \nabla A\right) (e_{i},w)+A\left(
\sigma e_{i}+fAe_{i}\right) ,e_{i}\right) 
\]%
and using symmetry of the shape operator we have%
\[
\Div Aw=n\sigma H +f\left\Vert A\right\Vert
^{2}+\sum\limits_{i=1}^{n}g\left( \left( \nabla A\right)
(e_{i},e_{i}),w\right) \text{.} 
\]%
Using equation \eqref{eqmeancurvature} in above equation, we have%
\begin{equation}
\Div Aw=n\sigma \alpha +f\left\Vert A\right\Vert ^{2}+nw\left( H
\right) -\overline{\ric}\left( w,N\right) \text{.}
\end{equation}%
Thus, equation \eqref{eqsigma} implies%
\[
\Delta f=n\sigma H +f\left\Vert A\right\Vert ^{2}+nw\left( H
\right) -\overline{\ric}\left( w,N\right) 
\]%
that is,%
\begin{equation}
f\Delta f=n\sigma fH +f^{2}\left\Vert A\right\Vert ^{2}+nfw\left(
H \right) -f\overline{\ric}\left( w,N\right) \text{.}
\end{equation}%
Now, suppose the hypersurface $L$ is compact, on integrating the above
equation, we have:

\begin{lemma}
Let $L$ be a compact spacelike hypersurface of an $(n+1)$-dimensional Lorentz manifold $\left( \overline{M}, \overline{g}\right) $. Then the following holds%
	\[
	\int\limits_{L}f^{2}\left( \left\Vert A\right\Vert ^{2}-nH^{2}\right)
	=\int\limits_{L}\left[ -\left\Vert Aw\right\Vert ^{2}+f\overline{\ric}\left(
	w,N\right) -n\sigma fH -nfw\left( H \right) -nf^{2}H^{2}%
	\right] \text{.} 
	\]
\end{lemma}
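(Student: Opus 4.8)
The plan is to obtain the identity by a single integration over $L$ of the pointwise relation
\[
f\Delta f = n\sigma f H + f^{2}\|A\|^{2} + n f\,w(H) - f\,\overline{\ric}(w,N)
\]
derived just above the statement, followed by an application of the divergence theorem to the left-hand side. Concretely, first I would note that a compact leaf of a foliation is a closed manifold (no boundary), so $\int_{L}\Div(f\nabla f)=0$; expanding $\Div(f\nabla f)=f\Delta f+\|\nabla f\|^{2}$ --- here $\Delta=\Div\grad$, the convention under which $\int_{L}f\Delta f=-\int_{L}\|\nabla f\|^{2}$, as already used in Section~3 --- this turns the left-hand side into $-\int_{L}\|\nabla f\|^{2}$.

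Next I would use the second relation in \eqref{eqsigma}, namely $\grad f = Aw$, to identify $\|\nabla f\|^{2}=\|Aw\|^{2}$ pointwise on $L$. Integrating the displayed relation and substituting then yields
\[
-\int_{L}\|Aw\|^{2}=\int_{L}\Bigl(n\sigma f H + f^{2}\|A\|^{2} + n f\,w(H) - f\,\overline{\ric}(w,N)\Bigr).
\]
Solving for $\int_{L}f^{2}\|A\|^{2}$, subtracting $\int_{L}n f^{2}H^{2}$ from both sides, and rearranging the remaining terms gives precisely the stated expression for $\int_{L}f^{2}(\|A\|^{2}-nH^{2})$.

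I do not anticipate any real obstacle: the whole argument is bookkeeping once the pointwise identity preceding the Lemma is in hand. The only two points worth a moment's care are (i) that the Laplacian sign convention is the one in force throughout the paper, so that passing from $\Div(f\nabla f)$ to $f\Delta f+\|\nabla f\|^{2}$ introduces no spurious sign, and (ii) that it is exactly the compactness of $L$ (equivalently, having no boundary) that kills the total-divergence term --- if the leaf were only complete and noncompact one would instead need an integrability hypothesis together with an exhaustion or Stokes-at-infinity argument, in the spirit of Section~4.
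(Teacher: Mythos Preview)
Your proposal is correct and follows exactly the route the paper takes: the text preceding the Lemma establishes the pointwise identity $f\Delta f = n\sigma fH + f^{2}\|A\|^{2} + nfw(H) - f\,\overline{\ric}(w,N)$, and the paper's ``proof'' is simply the phrase ``on integrating the above equation, we have'', which you have unpacked in full detail (integration by parts using compactness, then $\grad f = Aw$, then rearrangement). There is nothing to add.
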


Now, using equation \eqref{eqsigma}, we have%
\[
\nabla _{X}\nabla _{Y}w=X(\sigma )Y+\sigma \nabla _{X}Y+X(f)AY+f\left(
\nabla A\right) (X,Y)+fA\left( \nabla _{X}Y\right) 
\]%
and it amount to the following expression for the curvature tensor of the hypersurface $L$.%
\[
R\left( X,Y\right) w=X(\sigma )Y-Y(\sigma )X+X(f)AY-Y(f)AX+f\left( \left(
\nabla A\right) (X,Y)-\left( \nabla A\right) (Y,X)\right) 
\]%
and inserting equation \eqref{eqcurvature} in above equation we conclude%
\[
R\left( X,Y\right) w=X(\sigma )Y-Y(\sigma )X+X(f)AY-Y(f)AX-f\overline{R}%
(X,Y)N\text{.}
\]%
Above equation implies the following expression for the Ricci tensor%
\[
\ric \left( Y,w\right) =-(n-1)Y(\sigma )+g\left( A\left( \grad f\right)
,Y\right) -nH Y(f)-f\overline{\ric}\left( Y,N\right) \text{,}
\]%
that is,%
\[
\ric\left( w,w\right) =-(n-1)w(\sigma )+g\left( A\left( \grad f\right)
,w\right) -nH w(f)-f\overline{\ric}\left( w,N\right) \text{,}
\]%
which in view of equation \eqref{eqsigma} implies%
\begin{equation}
\ric(w,w)=-(n-1)w(\sigma )+\left\Vert Aw\right\Vert ^{2}-nH w(f)-f%
\overline{\ric}\left( w,N\right) \text{.}
\end{equation}%
Thus, we have%
\[
-\left\Vert Aw\right\Vert ^{2}+f\overline{\ric}\left( w,N\right)
=-(n-1)w(\sigma )-nH w(f)-\ric(w,w)
\]%
and inserting this value in Lemma 1, we have%
\begin{eqnarray*}
&&\int\limits_{L}f^{2}\left( \left\Vert A\right\Vert ^{2}-nH^{2}\right)=\\
&&\int\limits_{L}\left[ -(n-1)w(\sigma )-nH w(f)-\ric(w,w)-n\sigma
fH -nfw\left( H \right) -nf^{2}H^{2}\right],\\
\end{eqnarray*}
that is,%
\[
\int\limits_{L}f^{2}\left( \left\Vert A\right\Vert ^{2}-nH^{2}\right)
=\int\limits_{L}\left[ -(n-1)w(\sigma )-nw(H f)-\ric(w,w)-n\sigma
fH -nf^{2}H^{2}\right] \text{.}
\]%
Note that equation \eqref{eqsigma} implies $\Div\left( w\right) =n(\sigma +fH )$ and $\Div \left( \sigma w\right) = n\sigma (\sigma +fH) + w(\sigma )$ and inserting it in above integral we get%
\begin{eqnarray*}
&&\int\limits_{L}f^{2}\left( \left\Vert A\right\Vert ^{2}-nH^{2}\right)=\\
&&\int\limits_{L}\left[ n(n-1)\sigma (\sigma +fH )-nw(H
f)-\ric(w,w)-n\sigma fH -nf^{2}H^{2}\right] \text{.}	
\end{eqnarray*}

Furthermore, we have $\Div\left( H fw\right) =w\left( H
f\right) +nH f(\sigma +fH )$ and inserting it in above equation, yields%
\begin{eqnarray*}
&&\int\limits_{L}f^{2}\left( \left\Vert A\right\Vert ^{2}-nH^{2}\right)=\\
&&\int\limits_{L}\left[ n(n-1)\sigma (\sigma +fH )+n^{2}H f\left(\sigma +fH \right) -\ric(w,w)-n\sigma fH -nf^{2}H^{2}\right] \text{,}
\end{eqnarray*}
that is,%
\begin{equation}\label{eqintegral}
\int\limits_{L}f^{2}\left( \left\Vert A\right\Vert ^{2}-nH^{2}\right)
=\int\limits_{L}\left[ n(n-1)(\sigma +fH )^{2}-\ric(w,w)\right] \text{.}
\end{equation}

Now, we are ready to prove:

\begin{theorem}
Let $\xi $ be a closed conformal vector field on an $(n+1)$-dimensional Lorentz manifold $\left( \overline{M}, \overline{g}\right)$ and $(\overline{M}^{n+1}, \mathcal{F}, \overline{g},N)$ a foliation such that $\xi $ it is not tangent to the leaves $L\in\mathcal{F}$. If the Ricci curvature $\ric\left(\xi^{\top}, \xi^{\top}\right) $ of each leaf $L$ satisfies%
	\[
	\ric\left( \xi^{\top}, \xi^{\top}\right) \geq \frac{n-1}{n}\left(\Div \xi^{\top}\right) ^{2},
	\]%
then $L$ is a totally umbilical hypersurface.
\end{theorem}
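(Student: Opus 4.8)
The plan is to read the conclusion straight off the integral identity \eqref{eqintegral}, which has already been established for a compact leaf, after rewriting one of its terms. First I would recall from equation \eqref{eqsigma} that $\Div w = n(\sigma + fH)$, where $w=\xi^{\top}$ is the tangential projection of $\xi$ and $f=-\overline{g}(\xi,N)$; hence $n(n-1)(\sigma+fH)^{2}=\frac{n-1}{n}(\Div w)^{2}$, and substituting this into \eqref{eqintegral} gives
\[
\int_{L}f^{2}\bigl(\|A\|^{2}-nH^{2}\bigr)=\int_{L}\Bigl(\tfrac{n-1}{n}(\Div w)^{2}-\ric(w,w)\Bigr).
\]

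Next I would estimate the two sides in opposite directions. On the right, the curvature hypothesis $\ric(w,w)\ge \frac{n-1}{n}(\Div w)^{2}$ makes the integrand pointwise nonpositive, so the right-hand integral is $\le 0$. On the left, applying the Cauchy--Schwarz inequality to the eigenvalues of the (symmetric) shape operator $A$ gives $\|A\|^{2}\ge nH^{2}$ at every point, with equality at a point precisely when $A=-H\,I$ there; hence the left-hand integrand is pointwise nonnegative and the left-hand integral is $\ge 0$. Comparing the two, both integrals vanish and the nonnegative function $f^{2}(\|A\|^{2}-nH^{2})$ is identically zero on $L$.

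Finally I would note that $f$ has no zeros on $L$: the hypothesis that $\xi$ is nowhere tangent to the leaves means exactly that $f=-\overline{g}(\xi,N)$ never vanishes (and when $\xi$ is timelike and $L$ spacelike this is automatic, since $\overline{g}(\xi,\xi)=\|w\|^{2}-f^{2}<0$). Therefore $\|A\|^{2}=nH^{2}$ at every point of $L$, i.e.\ $A=-H\,I$, so $L$ is totally umbilical and $\mathcal{F}$ is a totally umbilical foliation. I do not expect a real obstacle at this stage: the substantive work — the repeated use of \eqref{eqsigma}, \eqref{eqcurvature}, \eqref{eqmeancurvature} together with several integrations by parts — has already been carried out in deriving \eqref{eqintegral}. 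The only point worth flagging is that each leaf must be taken compact for that identity (and the Lemma preceding it) to be available, a hypothesis implicit from the setting of this section; under it the proof is just the chain of two elementary inequalities above.
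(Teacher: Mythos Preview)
Your proposal is correct and follows essentially the same route as the paper: rewrite the right-hand side of \eqref{eqintegral} via $\Div w=n(\sigma+fH)$, use the Ricci hypothesis to make it nonpositive, use the Cauchy--Schwarz (what the paper calls ``Schwartz'') inequality $\|A\|^{2}\ge nH^{2}$ to make the left nonnegative, and conclude from $f\neq 0$. Your remark that compactness of the leaf is being used implicitly (it is needed for the Lemma and hence for \eqref{eqintegral}) is apt, and your parenthetical observation that $f\neq 0$ is automatic when $\xi$ is timelike is a nice addition.
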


\begin{proof}
Using equation \eqref{eqintegral}, we have%
	\begin{equation}
	\int\limits_{L}f^{2}\left( \left\Vert A\right\Vert ^{2}-nH^{2}\right)
	=\int\limits_{L}\left[ \frac{(n-1)}{n}(\Div \xi^{\top} )^{2}-\ric(\xi^{\top}, \xi^{\top})\right] 
	\text{.}
	\end{equation}%
	Using the condition in the statement, we have%
	\[
	\int\limits_{L}f^{2}\left( \left\Vert A\right\Vert ^{2}-nH^{2}\right)
	\leq 0\text{,}
	\]%
	which in view of Schwartz's inequality implies%
	\[
	f^{2}\left( \left\Vert A\right\Vert ^{2}-nH^{2}\right) =0\text{.}
	\]%
	Note that $\xi $ is not tangent to $L$ imples $f\neq 0$ and above equation on connected $L$ implies $\left\Vert A\right\Vert ^{2}=nH^{2}$. This equality holds if and only if $A=H I$. Thus, $L$ is totally umbilcal hypersurface.
\end{proof}

\noindent {\bf Acknowledgments.} The third author would like to thank Edson Sampaio for his interest in this manuscript and for listening patiently to my first ideas in this research, in particular, in the discussion on Example 4.5. The fourth author was partially supported by CNPq, Grant 402668/2016-2.


\begin{thebibliography}{99}

\bibitem{abe} Abe, K.: Applications of a Riccati type differential equation to Riemannian manifolds with totally geodesic distributions, Toruko Math. J. 24 (1973) 425--440.  

\bibitem{caminhayuri} Alías, L.J.; Caminha, A.; do Nascimento, F.Y.: A maximum principle at infinity with applications to geometric vector fields. J. Math. Anal. Appl. 474 (2019) 242--247.

\bibitem{lucas} Barbosa, J.L.M.; Kenmotsu, K.; Oshikiri, G.: Foliations by hypersurfaces with constant mean curvature, Math. Z. 207 (1991) 97--108.

\bibitem{lucasestum} Barbosa, J.L.M.; do Carmo, M.P.: Stability of Hypersurfaces With Constant Mean Curvature, Math. Z. 185 (1984) 339--353.

\bibitem{lucasestdois} Barbosa, J.L.M.; do Carmo, M.P.:  Stability of Hypersurfaces With Constant Mean Curvature in Riemannian Manifolds, Math. Z. 197 (1988) 123--138.

\bibitem{lucasfoli} Barbosa, J.L.M.; Gomes, J.M.; Silveira, A.M.: Foliation of 3-Dimensional Space Forms by Surfaces With Constant Mean Curvature, Bol. Soc. Bras. Mat. 18 (1987) 1--12.

\bibitem{barbosaoliker} Barbosa, J.L.M; Oliker, V.: Spacelike hypersurfaces with constant mean curvature in Lorentz manifolds, Matematica Contempor\^anea. 40 (1993) 27--44.

\bibitem{barroscaminha} Barros, A.; Brasil, A.; Caminha, A.: Stability of spacelike hypersurfaces in foliated spacetimes. Differential Geometry and its Applications 26 (2008) 357--365.

\bibitem{caminha} Camargo, F.; Caminha,  A.; Sousa, P.:  Complete foliations of space forms by hypersurfaces, Bull. Braz. Math. Soc.  41 (2010) 339--353.

\bibitem{euripedes} Chaves, R.M., da Silva, E.C.: Foliations by Spacelike Hypersurfaces on Lorentz Manifolds. Results Math 75, 36 (2020).

\bibitem{gervasiopalmas} Colares, A.G.; Palmas, O.: Spacelike foliations by $(n-1)$-umbilical hypersurfaces in spacetimes. The Asian Journal of Mathematics 17 (2013) 621--644.

\bibitem{David} Johnson, D.L.; Whitt, L.: Totally geodesic foliations, J. Differential Geometry (1980) 225--235.

\bibitem{montielriem} Montiel, S.: Stable constant mean curvature hypersufaces in some Riemannian manifolds, Comment. Math. Helv. (1998) 584--602.

\bibitem{montielfoliation} Montiel, S.:  Unicity of constant mean curvature hypersurfaces in some Riemannian manifolds, Indiana Univ. Math. J. 48(2) (1999) 711--748.

\bibitem{montiellorentz} Montiel, S.: Uniqueness of spacelike hypersurfaces of constant mean curvature in foliation spacetimes. Math. Ann. 314 (1999) 529--553.

\bibitem{oshikiristable} Oshikiri, G.I.: Jacobi fields and the stability of leaves of codimension-one minimal foliation. Tohoku Math. Journ. 34 (1982) 417-424.


\end{thebibliography}
\end{document}